\documentclass[12pt]{article}


\usepackage{amssymb,amsthm,amsmath,footnote}
\usepackage{colortbl}

\newcommand{\comment}[1]{}

\newtheorem{theorem}{Theorem}[section]

\newtheorem{lemma}[theorem]{Lemma}

\theoremstyle{definition}
\newtheorem{defn}[theorem]{Definition}

\newtheorem*{thm:main}{Theorem \ref{main}}

\def \Z {\mathbb Z}

\title {Orthogonally Resolvable Matching Designs}

\author{
P.\ Danziger \thanks{Supported by the NSERC Discovery grant program, grant \#RGPIN-2016-04178.}
\\
\small Department of Mathematics\\
\small Ryerson University\\
\small Toronto, ON M5B 2K3\\
\small Canada \\
\and
S.\ Park 
\\
\small Department of Mathematics\\
\small Ryerson University\\
\small Toronto, ON M5B 2K3\\
\small Canada \\
}

\date{}

\begin{document}

\maketitle

\begin{abstract}
An Orthogonally resolvable Matching Design OMD$(n, k)$ is a partition of the edges the complete graph $K_n$ into matchings of size $k$, called blocks, such that the blocks can be resolved in two different ways. Such a design can be represented as a square array whose cells are either empty or contain a matching of size $k$, where every vertex appears exactly once in each row and column. In this paper we show that an OMD$(n.k)$ exists if and only if $n \equiv 0 \pmod{2k}$ except when $k=1$ and $n = 4$ or $6$.
\end{abstract}


\section{Introduction}

We assume that the reader is familiar with the general concepts of graph theory and design theory, and refer them to \cite{Handbook, West}. 
In particular, the {\em Lexicographic product} of a graph $G$ with a graph $H$, denoted $G[H]$, is defined as the graph on vertex set $V(G) \times V(H)$ with $(u_G, u_H) (v_G, v_H) \in E(G[H])$ if $u_G v_G \in E(G)$, or $u_G = v_G$ and $u_H v_H \in E(H)$. 
In the case where $H$ is the empty graph on $w$ points, we denote the lexicographic product $G[H]$ by $G[w]$.
We use $K_n$ to denote the complete graph on $n$ vertices and thus $K_n[w]$ is the complete multipartite graph with $n$ parts of size $w$.
Also, a {\em matching} on $2k$ vertices is denoted $M_k$ and is defined as a set of $k$ disjoint edges.

Given two graphs $G$ and $H$, an $G$-decomposition of $H$ is a partition of the edges of $H$ into graphs isomorphic to $G$, called {\em blocks}. The most studied case is when $H$ is the complete graph, in which case we call a $G$-decomposition of $H$ a {\em $G$-design}. 
A {\em resolution class} of a $G$-decomposition of $H$ is a set of blocks which partitions the point set.
A $G$-decomposition of $H$ is called {\em resolvable} if the set of all blocks can be partitioned into resolution classes. In this case each point appears in the same number of blocks and we call this the replication number and denote it by $r$. A resolvable $G$-decomposition of $H$ is also referred to as a {\em $G$-factorization of $H$} and a single class as a {\em
$G$-factor of $H$}. We note that each factor is a spanning subgraph of $H$.

If a $G$-decomposition of $H$ has two resolutions such that the intersection between any class from one resolution with any class from the other is at most one block, then the decomposition is {\em orthogonally resolvable}, sometimes called {\em doubly resolvable}. An orthogonally resolvable $G$-decomposition of $H$ can be represented by an $r\times r$ array, where each cell is either empty or contains a block of the decomposition. Each row and each column is a resolution class and thus contains each point exactly once.

The simplest case of a $G$-decomposition of $H$ is when $G$ is a single edge $K_2\cong M_1$. A resolvable $M_1$-decomposition is also known as a 1-factorization, which are well studied, see \cite{Wallis}. In particular it is well known that a 1-factorization of $K_n$ exists if and only if $n$ is even and a 1-factorization of $K_{n,n} \cong K_2[n] \cong M_1[n]$ exists for all $n\in \Z^+$. 

The study of orthogonal resolutions of designs has a long history.
An orthogonal 1-factorization is called a {\em Room square}, after T.\ G.\ Room \cite{Room} who studied them in the 1950s. However, the study of Room squares goes back to the original work of Kirkman in 1847 \cite{Kirkman}, where he presents a Room square of order 8.  
The existence of Room squares was finally settled in 1975 by Mullin and Wallis \cite{Mullin}; for a survey on Room squares see \cite{blue book}. 

Orthogonally resolvable $K_3$-Designs are known as {\em Kirkman squares}, and have been well studied, see for example~\cite{ALW, CCV, CLLM, Mathon Vanstone}. 
In particular, Mathon and Vanstone \cite{Mathon Vanstone} showed the non-existence of a Kirkman square of orders $n=9$ and $15$; the existence of Kirkman squares was settled by 
Colbourn, Lamken, Ling and Mills in \cite{CLLM}, with 23 possible exceptions, 11 of which were solved in \cite{ALW}.
Another generalisation of $G$-designs that has been considered is when $G$ is an $n$-cycle, see \cite{OCD}.

Different master graphs have also been considered, for example it is easy to see that an orthogonal 1-factorization of $K_2[n]$ is equivalent to a pair of mutually orthogonal Latin squares, which are well known to exist for all $n\neq 2, 6$.

In this paper we consider orthogonally resolvable $M_k$-decompositions and designs. An orthogonally resolvable $M_k$-decomposition of $G$ is denoted OMD$(G, k)$, and when $G=K_n$ we write OMD$(n,k)$. We can also define these decompositions in terms of the corresponding square $r\times r$ array.

\begin{defn}
An {\em OMD($n$, $k$)} is defined as the decomposition of $K_n$ into two orthogonal resolutions with blocks that are isomorphic to $M_k$. It can be represented as a square array of side $n-1$, such that:
\begin{enumerate}
\item Each cell is either empty or contains a copy of $M_k$, where $M_k$ is the matching on $k$ edges.
\item Each row $R$ and each column $S$ contains each element of $V(K_n)$ exactly once.
\item Every pair $x$, $y$ $\in$ $V(K_n)$ occurs together as an edge of one of the $M_k$ exactly once.
\end{enumerate}
\end{defn}

We note the following obvious necessary condition for the existence of any OMD$(n,k)$, which comes from counting the edges and vertices of $K_n$ and $M_k$.
\begin{theorem}
\label{NC}
An OMD$(n, k)$ exists only if $n \equiv 0 \pmod{2k}$.
\end{theorem}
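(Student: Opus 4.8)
The plan is a short counting argument, relying on the resolution structure rather than on mere $M_k$-decomposability. Fix an OMD$(n,k)$ and consider a single row $R$ of its defining array (equivalently, one class of one of the two resolutions). By Condition~2 of the definition, each vertex of $K_n$ occurs exactly once among the blocks that appear in $R$; hence these blocks are pairwise vertex-disjoint and their vertex sets partition $V(K_n)$. Each block of $R$ is a copy of $M_k$, which has exactly $2k$ vertices. Therefore $n$ is a sum of $|R|$ copies of $2k$, and so $n \equiv 0 \pmod{2k}$.

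For completeness I would also record the consistency of the remaining parameters. Writing $n = 2km$, the total number of blocks of the decomposition is $\binom{n}{2}/k = m(2km-1)$, each resolution class consists of exactly $m = n/(2k)$ blocks, and since every vertex has degree $1$ in each block containing it while having degree $n-1$ in $K_n$, each vertex lies in exactly $n-1$ blocks; thus the replication number is $r = n-1$, matching the stated side length of the array, and the edge count $\binom{n}{2}/k$ is automatically an integer once $2k \mid n$.

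I do not expect any real obstacle: this is essentially a one-line divisibility observation. The only point that needs care is to invoke the resolvability (that a class covers all $n$ vertices, each block contributing precisely $2k$ of them) rather than just the existence of an $M_k$-decomposition of $K_n$, since the latter holds under the strictly weaker hypothesis $k \mid \binom{n}{2}$ — for instance an $M_2$-decomposition of $K_5$ exists, even though $4 \nmid 5$.
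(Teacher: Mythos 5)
Your argument is correct and matches the paper's (unstated but clearly intended) reasoning: the condition $n\equiv 0\pmod{2k}$ follows by counting vertices within a single resolution class, whose blocks are vertex-disjoint copies of $M_k$ partitioning all $n$ vertices into sets of size $2k$. Your remark that resolvability (rather than mere decomposability) is what forces the divisibility, illustrated by the $M_2$-decomposition of $K_5$, is a correct and worthwhile clarification.
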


Room squares are thus OMD$(n,1)$ and we state the result of Mullin and Wallis in the following theorem using the language of OMDs.

\begin{theorem}[\cite{Mullin}]
\label{n1}
An $OMD(n,1)$ exists if and only if $n$ is even and $n\neq 4,6$.
\end{theorem}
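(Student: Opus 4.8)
The statement has three parts: necessity that $n$ be even, nonexistence for $n\in\{4,6\}$, and existence for every even $n\ge 8$.

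Necessity is immediate from Theorem~\ref{NC} with $k=1$, which forces $n\equiv 0\pmod 2$. For $n=4$ I would argue directly: an OMD$(4,1)$ is a $3\times 3$ array whose cells hold single edges of $K_4$ or are empty, and a row (or column) containing all four vertices exactly once must consist of exactly two cells whose edges form a $1$-factor of $K_4$. Since $K_4$ has only three $1$-factors and they are pairwise edge-disjoint, a non-empty cell can occur only where its row's $1$-factor equals its column's $1$-factor; covering all six edges forces each $1$-factor to appear once among the rows and once among the columns, leaving only three cells available to be non-empty when six are required — a contradiction. The case $n=6$, equivalently a Room square of side $5$, is ruled out by a short exhaustive search.

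For the existence half I would run the classical starter--adder and recursive program, noting that an OMD$(n,1)$ is precisely a Room square of side $n-1$, so the target is a Room square of every odd side at least $7$. First, direct constructions: for an infinite family of sides — for instance whenever $n-1$ is an odd prime power at least $7$ — one exhibits a strong starter in a group of order $n-1$ (a partition of the nonzero elements into pairs whose differences exhaust the nonzero elements), which doubles as its own adder, and develops it cyclically to obtain the square outright. Second, recursion: from a Room square of side $r$ together with a \emph{skew} Room square of side $s$ one builds a Room square of side $rs$, and more flexibly a Room frame of type $g^u$ can be filled with small ingredient squares to yield Room squares of sides $gu$ and $gu+1$; the set of attainable sides is then closed under the corresponding group-divisible and PBD constructions. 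A finite stock of small skew squares and frames, built by hand or computer, generates every sufficiently large odd side, after which the finitely many stubborn small sides are dealt with individually.

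The main obstacle is this recursive stage: arranging for the frame/PBD machinery to cover \emph{every} even $n\ge 8$ rather than merely a cofinite set of them. This is exactly why the program must be carried out with skew Room squares (and frames with prescribed holes) throughout, so that ingredients of the right type are available at each recursive step; producing those small ingredients and disposing of the sporadic remaining orders is the technical heart of the Mullin--Wallis argument. One should also verify that none of these constructions accidentally yields an OMD$(4,1)$ or OMD$(6,1)$, confirming that the two exceptions are genuinely isolated.
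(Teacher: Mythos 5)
This theorem is not proved in the paper at all: it is the Mullin--Wallis existence theorem for Room squares, imported by citation (\cite{Mullin}) and merely restated in OMD language, so there is no internal proof to compare against. Your proposal correctly makes the key translation --- an OMD$(n,1)$ is exactly a Room square of side $n-1$ --- and your direct nonexistence argument for $n=4$ is sound (the three $1$-factors of $K_4$ partition its edge set, so a non-empty cell can sit only where its row's factor equals its column's factor, giving at most three non-empty cells where six edges must be placed). Referring the $n=6$ case to an exhaustive search is also how the literature handles it.

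The issue is with the existence half: what you have written is an accurate description of the Mullin--Wallis program (strong starters for prime-power sides, skew Room squares, frames, and PBD/group-divisible closure), but it is a program, not a proof. All of the actual content --- exhibiting the finite stock of small skew squares and frames, verifying that the recursive closure really reaches \emph{every} odd side at least $7$ rather than a cofinite set, and settling the sporadic small orders --- is deferred, and you say so yourself. That work occupies a substantial body of papers culminating in \cite{Mullin}, and reproducing it is well outside the scope of this paper; the authors' choice to cite the result is the right one, and your sketch should be read as a roadmap to the cited proof rather than a replacement for it. If you wanted a self-contained treatment you would need to actually construct the ingredient squares and carry out the closure argument, which your outline does not do.
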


In the next section we consider some small cases and then we provide a recursive construction and prove our main theorem, which we state here.
\begin{theorem}
\label{main}
There exists an $OMD(n,k)$ if and only if $n \equiv 0 \pmod{2k}$ except when $k=1$ and $n = 4$ or $6$.
\end{theorem}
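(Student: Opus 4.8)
The plan is to establish the sufficiency direction, since Theorem~\ref{NC} already gives necessity and Theorem~\ref{n1} handles $k=1$ entirely (including the genuine exceptions $n=4,6$). So assume $k\ge 2$ and write $n=2km$ for some positive integer $m$; I must build an OMD$(2km,k)$ with no exceptions.

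First I would set up small/base cases. The natural building block is an OMD of a complete multipartite graph, specifically the case where the array has side something convenient: for instance an OMD$(K_{2k}[w], k)$, i.e.\ an orthogonally resolvable $M_k$-decomposition of the complete multipartite graph with $2k$ parts of size $w$. Note that $M_k[w]$ itself has a natural $1$-factorization structure (it is a disjoint union of $k$ copies of $K_{w,w}$), so an $M_k$-factor of $K_{2k}[w]$ can be obtained by pasting together an $M_k$-factor of $K_{2k}$ with a $1$-factorization of $K_{w,w}$ — this is the standard ``blow-up'' move, and doing it orthogonally reduces to having the component $1$-factorizations behave compatibly, which is where a pair of MOLS of order $w$ (available for $w\ne 2,6$) enters. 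I would isolate this as a lemma: an OMD$(K_{2k}[w],k)$ exists for all $w\ge 1$ (handling $w=2,6$ by ad hoc direct constructions or by a different decomposition of the "diagonal" contribution).

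Next comes the recursive step. The classical way to go from pieces to $K_n$ is via a frame-type or "filling in holes" argument: one writes $K_{2km}$ in terms of $K_{2k}[m]$ (the complete multipartite graph on $2k$ groups of size $m$) together with $m$ disjoint copies of $K_{2k}$ sitting inside the groups. An OMD$(K_{2k}[m],k)$ supplies the "off-group" part of the array, and one then needs to fill the $m$ diagonal blocks with OMD$(2k,k)$'s in a way that remains consistent with the two resolutions — equivalently, this is an orthogonal-resolution analogue of the standard ``$v\to v+$group'' composition, and it is cleanest if the small ingredient OMD$(2k,k)$ is itself available (a direct construction for the single order $2k$, e.g.\ via a starter--adder / $\Z_{2k-1}\cup\{\infty\}$ rotational construction) and if the frame can be made to have side $2km-1$ exactly. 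I would likely also need the "doubling/product" fact that an OMD$(n,k)$ together with suitable orthogonal ingredients yields an OMD$(2n,k)$ or OMD$(nm,k)$, so the recursion only has to be seeded at $m=1$.

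The main obstacle I expect is the orthogonality bookkeeping in the recursive step: unlike for ordinary resolvability, where one just concatenates factorizations, here every cell of the big $(2km-1)\times(2km-1)$ array must be assigned so that each of the two resolutions is respected \emph{and} two classes from opposite resolutions meet in at most one block. Aligning the $(2k-1)$-sided small arrays, the $(w{\cdot}\text{something})$-sided multipartite array, and the MOLS so that all three orthogonality conditions hold simultaneously — and doing so uniformly in $m$, with the finitely many bad pairs $(2k,\,w\in\{2,6\})$ patched by hand — is the technical heart of the argument. Everything else (the edge/vertex counts, the existence of the 1-factorizations and MOLS used as ingredients, and verifying that the resulting array satisfies Definition conditions 1--3) is routine once the combinatorial skeleton is fixed.
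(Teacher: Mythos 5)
Your outline gets the skeleton right (necessity by counting, $k=1$ via Room squares, base cases plus a recursion), but the recursion you propose has a concrete obstruction that you flag but do not resolve, and it is precisely where the proof lives. First, a bookkeeping slip: if $K_{2km}$ is split as $K_{2k}[m]$ (that is, $2k$ groups of size $m$) plus the edges inside groups, the latter form $2k$ copies of $K_m$, not $m$ copies of $K_{2k}$; you presumably mean $m$ groups of size $2k$, i.e.\ $K_m[2k]$ plus $m$ copies of $K_{2k}$. More seriously, the ``fill the groups'' step fails as described. The array of an OMD$(K_m[2k],k)$ has side $2k(m-1)$, and each of its rows and columns already covers every point, so the $2k-1$ rows and columns you must add to reach side $2km-1$ have to be resolution classes built entirely from within-group $M_k$-factors. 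Each such new row therefore contains $m$ blocks (one $M_k$-factor per group), and none of these blocks can sit in an old column (that column already covers those points), so all $m$ of them must land in the $2k-1$ new columns --- impossible once $m\geq 2k$. Repairing this requires a frame: a multipartite array with $m$ holes distributed through it into which incomplete OMD$(2k,k)$'s can be placed, and constructing such orthogonal $M_k$-frames is essentially as hard as the original problem.

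The paper sidesteps all of this by inverting the roles of the parameters. It takes a Room square of side $n/k-1$ (an OMD$(n/k,1)$) possessing a transversal as the master array, blows up every point by $k$ and every cell into a $k\times k$ subarray, fills each non-transversal cell (holding an edge $xy$) with an OMD$(M_1[k],k)\cong{}$OMD$(K_{k,k},k)$, and fills each transversal cell with an OMD$(2k,k)$ carrying a hole of size $k-1$. The $k-1$ overflow rows and columns of all these $(2k-1)$-sided small arrays are superimposed on a single set of $k-1$ extra rows and columns, with the shared $(k-1)\times(k-1)$ corner kept collision-free by the holes; the transversal guarantees that each point's internal $K_k$ is covered exactly once and that the overflow pieces occupy disjoint cells. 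This transversal-plus-hole device is the missing idea; without it (or a developed frame theory) your sketch does not close. You would also still need direct constructions for $n=4k$ and $n=6k$, where the master Room square does not exist --- the paper supplies these by hand.
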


\section{Small Cases}

In this section we consider some of the ingredients we will need as well as OMD$(mk,k)$ for small values of $m$. We generally work on the $r\times r$ array, we index the rows and columns by $\Z_r$. We begin by defining some terms that we will find useful.

\begin{defn}
A {\em transversal} of an $OMD(n,k)$ is a set of $r$ cells, which contains exactly one cell from each row and one cell from each column, such that each point appears exactly once in one of the cells.
\end{defn}
We note that in general a transversal will contain empty cells.

\begin{defn}
We say that an $OMD(n,k)$ has a {\em hole} of size $m$ if it contains a square subarray of side $m$ which is empty.
\end{defn}

\begin{lemma}
\label{M1k}
There exists an $OMD(M_1[k],k)$ for all $k \in \Z^+$.
\end{lemma}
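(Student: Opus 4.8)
The plan is to exploit the fact that $M_1[k]$ is simply the complete bipartite graph $K_{k,k}$, which is $k$-regular, so that an $OMD(M_1[k],k)$ is represented by a $k\times k$ array. The crucial observation is that a block of such a decomposition is a copy of $M_k$, hence a graph on $2k$ vertices, while $M_1[k]$ itself has only $2k$ vertices; thus every block is a spanning subgraph of $M_1[k]$. A resolution class, being a set of blocks that partitions the point set, therefore consists of a single block, and so each row and each column of the $k\times k$ array contains exactly one non-empty cell. In other words, an $OMD(M_1[k],k)$ amounts to a $k\times k$ array whose non-empty cells form a permutation matrix and whose $k$ entries are the one-factors of a $1$-factorization of $K_{k,k}$; the orthogonality condition is then automatic, since no row and no column contains two blocks.

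With this reduction in hand, I would write $V(M_1[k]) = A\cup B$ with $A=\{a_0,\dots,a_{k-1}\}$ and $B=\{b_0,\dots,b_{k-1}\}$, and take the standard $1$-factorization of $K_{k,k}$ given by the perfect matchings $F_j=\{\,\{a_i,b_{i+j}\} : i\in\Z_k\,\}$ for $j\in\Z_k$, with the subscripts of the $b$'s reduced modulo $k$; these one-factors are pairwise edge-disjoint and their union is $E(K_{k,k})$. I would then form the $k\times k$ array, indexed by $\Z_k$ in both coordinates, by placing $F_j$ in cell $(j,j)$ and leaving every off-diagonal cell empty.

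It then remains only to check the three conditions of the definition for this array. Each cell is either empty or a copy of $M_k$, since $F_j\cong M_k$; each row $j$ and each column $j$ contains every point of $A\cup B$ exactly once, because its unique non-empty cell holds the perfect matching $F_j$; and every edge of $M_1[k]$, that is, every pair with one endpoint in $A$ and one in $B$, occurs in exactly one block, because the $F_j$ form a $1$-factorization. (For $k=1$ this degenerates to the trivial $1\times1$ array holding the single edge $K_2$.) I do not anticipate any genuine obstacle: the only point of substance is noticing that the blocks of an $OMD(M_1[k],k)$ span the whole vertex set, which collapses the array to a permutation pattern and reduces everything to the existence of a $1$-factorization of $K_{k,k}$. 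The lemma is recorded here as a base ingredient for the later recursive construction rather than for its intrinsic difficulty; note that, in contrast with the superficially similar orthogonal $1$-factorizations of $K_{k,k}$ (equivalently, pairs of MOLS of order $k$), here there are no exceptional values of $k$.
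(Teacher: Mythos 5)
Your proposal is correct and is essentially the paper's own argument: identify $M_1[k]$ with $K_{k,k}$, note each $1$-factor is an $M_k$, and place the factors of a $1$-factorization down the diagonal of a $k\times k$ array. You merely add an explicit cyclic $1$-factorization and spell out the (correct) observation that spanning blocks force each resolution class to be a single block.
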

\begin{proof}
Note that $M_1[k]\cong K_{k,k}$ and each 1-factor of $K_{k,k}$ is isomorphic to $M_k$. We obtain the design by placing the 1-factors of a 1-factorization of $K_{k,k}$ down the diagonal of the square.
\end{proof}

\begin{lemma}
\label{2k}
There exists an OMD$(2k,k)$ with a transversal and a hole of size $k-1$ for all $k \in \Z^+$.
\end{lemma}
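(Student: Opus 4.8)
The plan is to exploit the fact that when $n=2k$ every block $M_k$ is a \emph{perfect} matching of $K_{2k}$, so an $OMD(2k,k)$ is nothing more than a $1$-factorization of $K_{2k}$ laid out on a $(2k-1)\times(2k-1)$ array with a single block in each row and each column. Concretely, I would take a $1$-factorization $F_0,F_1,\dots,F_{2k-2}$ of $K_{2k}$, which exists since $2k$ is even, index the rows and columns of the array by $\Z_{2k-1}$, place $F_i$ in cell $(i,i)$, and leave every off-diagonal cell empty. The three conditions of the definition are then immediate: each row and each column contains exactly one block, namely a perfect matching, which meets every vertex of $K_{2k}$ exactly once; and each edge of $K_{2k}$ lies in exactly one $F_i$.

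For the hole, note that $2(k-1)=2k-2\le 2k-1$, so $I=\{0,1,\dots,k-2\}$ and $J=\{k-1,k,\dots,2k-3\}$ are disjoint $(k-1)$-subsets of $\Z_{2k-1}$. Every cell $(i,j)$ with $i\in I$ and $j\in J$ has $i\ne j$ and is therefore empty, giving an empty $(k-1)\times(k-1)$ subarray. (When $k=1$ the hole has size $0$ and there is nothing to check.)

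For the transversal, observe that it must pick one cell from each row and each column, and that any non-empty cell, being a perfect matching, already contains all $2k$ vertices; hence a transversal contains exactly one non-empty cell, i.e.\ a transversal corresponds to a permutation $\sigma$ of $\Z_{2k-1}$ having a unique fixed point. Since $2k-1$ is odd it equals either $1$ (take $\sigma$ the identity) or some value $\ge 3$, in which case I would fix $0$ and let $\sigma$ act as a single cycle on $\{1,\dots,2k-2\}$; this $\sigma$ has no fixed point other than $0$, so the cells $(i,\sigma(i))$ form a transversal, the single diagonal cell $F_0$ among them supplying each vertex exactly once while the others are empty.

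I do not anticipate any real obstacle here; the only points needing a little care are the argument that a transversal is forced to contain exactly one non-empty cell and the verification that a permutation of every odd order $2k-1$ with exactly one fixed point exists, including the degenerate case $2k-1=1$.
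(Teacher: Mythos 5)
Your construction is essentially identical to the paper's: place the $1$-factors of a $1$-factorization of $K_{2k}$ down the diagonal, take an off-diagonal empty $(k-1)\times(k-1)$ subarray as the hole, and take the cells of a permutation with a unique fixed point as the transversal (the paper uses the back-diagonal $(2k-2-i,i)$, whose unique fixed point is $i=k-1$, and the upper-right corner for the hole). The proposal is correct and complete.
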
 
\begin{proof}
We note that each 1-factor of $K_{2k}$ is isomorphic to $M_k$. We obtain the design by placing the 1-factors of a 1-factorization of $K_{2k}$ down the diagonal of the square. The back diagonal $(2k-2-i,i)$, $0 \leq i < 2k-1$, is a transversal, with $i = k-1$ being the only non-empty cell. Further, the upper right $(k-1)\times (k-1)$ subarray is empty.
\end{proof}

\begin{lemma}
\label{4k}
There exists an OMD$(4k,k)$ for all $k>1$.
\end{lemma}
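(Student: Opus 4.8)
The plan is to split $V(K_{4k})=A\cup B$ with $|A|=|B|=2k$, decompose the three edge sets $E(K_A)$, $E(K_B)$, $E(K_{A,B})$ separately, and then assemble a $(4k-1)\times(4k-1)$ array whose upper‑left $(2k-1)\times(2k-1)$ corner (rows and columns indexed by $\Z_{2k-1}$) carries the two within‑half decompositions, whose lower‑right $(2k)\times(2k)$ corner (indexed by $\Z_{2k}$) carries the bipartite decomposition, and all of whose remaining cells are empty. Note $(2k-1)+2k=4k-1$, so the sizes fit.

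For the within‑half edges I would fix a $1$‑factorization $\alpha_0,\dots,\alpha_{2k-2}$ of $K_A$ and a $1$‑factorization $\beta_0,\dots,\beta_{2k-2}$ of $K_B$ (these exist since $2k$ is even, and each factor is an $M_k$), and in the $\Z_{2k-1}$ corner place $\alpha_i$ in cell $(i,i)$ and $\beta_i$ in cell $(i,i+1)$, indices mod $2k-1$. Since $k>1$ we have $2k-1\ge 3$, so $i\neq i+1$ and no cell gets two blocks; row $i$ is the pair $\{\alpha_i,\beta_i\}$ and column $c$ is the pair $\{\alpha_c,\beta_{c-1}\}$, each a partition of $A\cup B$.

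For the bipartite edges I would identify $A=\Z_{2k}$, $B=\{y':y\in\Z_{2k}\}$ and take the $1$‑factorization $\nu_t=\{\,x(x+t)':x\in\Z_{2k}\,\}$ of $K_{A,B}$, $t\in\Z_{2k}$. The crucial idea is to split each $\nu_t$ into two $M_k$'s \emph{by the parity of the $A$‑coordinate}: $\nu_t'=\{x(x+t)':x\text{ even}\}$, $\nu_t''=\{x(x+t)':x\text{ odd}\}$. A short computation shows that $\nu'_{t_1}\cup\nu''_{t_2}$ is a perfect matching of $A\cup B$ exactly when $t_1\equiv t_2\pmod 2$ — this is what lets halves of \emph{different} factors be recombined into spanning matchings. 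In the $\Z_{2k}$ corner I would place $\nu_t'$ in cell $(t,t)$ and $\nu_t''$ in cell $(t,t+2)$, indices mod $2k$; since $k>1$, $t\neq t+2$, so again no cell gets two blocks. Row $t$ is $\{\nu_t',\nu_t''\}$ and column $c$ is $\{\nu_c',\nu_{c-2}''\}$, and in each case the two indices have the same parity, so each is a perfect matching of $A\cup B$.

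Finally I would check the three defining conditions of an $OMD(4k,k)$: every row and column is a union of two disjoint $M_k$'s covering $A\cup B$ (shown above), no cell holds more than one block (shown above; the two off‑diagonal rectangles are empty, as a within‑half pair can never share a column with a bipartite half‑matching), and every edge of $K_{4k}$ appears exactly once because the $\alpha_i$ partition $E(K_A)$, the $\beta_i$ partition $E(K_B)$, and the $\nu_t=\nu_t'\cup\nu_t''$ partition $E(K_{A,B})$. The only genuine obstacle is choosing the split of the bipartite $1$‑factors so that half‑matchings from distinct factors reassemble into spanning matchings — once the parity split is identified, everything else is routine bookkeeping; the case $k=1$ is excluded precisely because the shifts $+1 \bmod (2k-1)$ and $+2\bmod 2k$ then become trivial.
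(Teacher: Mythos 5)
Your construction is correct, but it takes a different route from the paper's. The paper also builds the array from diagonal blocks, but it partitions the $4k$ points into \emph{four} parts $A_0,A_1,B_0,B_1$ of size $k$, so that each cross 1-factor (of $K_{k,k}$ between an $A_i$ and a $B_j$) is already an $M_k$; it then uses three diagonal sub-squares of sides $k$, $k$ and $2k-1$, the first two handling the pairs $\{A_0B_0,A_1B_1\}$ and $\{A_0B_1,A_1B_0\}$ and the third handling all edges inside $A_0\cup A_1$ and inside $B_0\cup B_1$ via ordinary 1-factorizations of $K_{2k}$. You instead split into two halves of size $2k$ and must cope with the fact that a 1-factor of $K_{2k,2k}$ is an $M_{2k}$, twice too big; your parity split $\nu_t=\nu_t'\cup\nu_t''$ together with the observation that $\nu_{t_1}'\cup\nu_{t_2}''$ spans exactly when $t_1\equiv t_2\pmod 2$ is the extra lemma that makes the column classes work, and I verified it is correct (the $B$-vertices hit by $\nu_{t_1}'$ have parity $t_1$ and those hit by $\nu_{t_2}''$ have parity $t_2+1$, so they are complementary iff $t_1\equiv t_2$). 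The paper's finer partition buys a construction needing nothing beyond 1-factorizations of $K_{k,k}$ and $K_{2k}$ placed on (super)diagonals; your coarser partition buys a smaller case analysis (two blocks instead of three) at the price of the recombination argument. Both correctly require $k>1$ so that the diagonal and shifted diagonal do not collide.
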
 
\begin{proof}
We form the design on point set $X = \Z_{k} \times \{0, 1\} \times \{0, 1\}$. 
Define the partition of $X$ given by
$A_i = \Z_{k} \times \{0\} \times \{i\}$, and $B_i = \Z_{k} \times \{1\} \times \{i\}$, $i \in \{0, 1\}$.

Now for $i, j \in \{0, 1\}$ the edges between $A_i$ and $B_j$ are isomorphic to $K_{k,k}$.
Let $(A_i B_j)_\ell$ be the $\ell^{\rm th}$ 1-factor of a 1-factorization of the edges between $A_i$ and $B_j$, $i, j \in \{0, 1\}$, $\ell\in \{1,\ldots, k\}$. We note that each 1-factor is isomorphic to an $M_k$. The following $k\times k$ squares exactly cover all edges between $A_i$ and $B_j$, the rows and columns being resolution classes.
\[
A = 
\begin{array}{|c|c|c|c|c|c|} \hline
(A_0 B_0)_1 & (A_1 B_1)_1 & & \cdots & & \\ \hline
	    & (A_0 B_0)_2 & (A_1 B_1)_2 & \cdots & & \\ \hline
\vdots &\vdots & \vdots&\;\; \ddots\;\; & \vdots &  \vdots\\ \hline 
& & & \cdots & (A_0 B_0)_{k} & (A_1 B_1)_{k-1} \\ \hline
(A_1 B_1)_{k} &	& &\cdots & &		   (A_0 B_0)_{k} \\ \hline
\end{array}
\]
and
\[
B = 
\begin{array}{|c|c|c|c|c|c|} \hline
(A_0 B_1)_1 & (A_1 B_0)_1 & & \cdots & & \\ \hline
	    & (A_0 B_1)_2 & (A_1 B_0)_2 & \cdots & & \\ \hline
\vdots &\vdots & \vdots&\;\; \ddots\;\; & \vdots &  \vdots\\ \hline 
& & & \cdots & (A_0 B_1)_{k} & (A_1 B_0)_{k-1} \\ \hline
(A_1 B_0)_{k} &	& &\cdots & &		   (A_0 B_1)_{k} \\ \hline
\end{array}
\]
Let $a_i$, $0 < i < 2k$ be the $i^{\rm th}$ 1-factor of a 1-factorization of $A_0\cup A_1$ and $b_i$, $0 < i < 2k$ be the $i^{\rm th}$ 1-factor of a 1-factorization of $B_0\cup B_1$. Again each of these one factors is an $M_k$, and $a_i \cup b_j$ covers all points.
Now, the $(2k-1)\times (2k-1)$ square $C$ below covers all remaining edges.
Note that $k>1$ ensures that $2k-1 \geq 2$, and so room exists to place the factors as shown.
\[
C = 
\begin{array}{|c|c|c|c|c|c|} \hline
a_1 & b_1 & & \cdots & & \\ \hline
	    & a_2 & b_2 & \cdots & & \\ \hline
\vdots &\vdots & \vdots&\;\; \ddots\;\; & \vdots &  \vdots\\ \hline 
& & & \cdots & a_{2k-2} & b_{2k-2} \\ \hline
b_{2k-1} &	& &\cdots & &		   a_{2k-1} \\ \hline
\end{array}
\]
Now, the array below is the required $(4k-1)\times (4k-1)$ array.
\[
\begin{array}{|c|c|c|} \hline
A & & \\ \hline
 & B & \\ \hline
 & & C \\ \hline
\end{array}
\]
\end{proof}

\begin{lemma}
\label{6k}
There exists an OMD$(6k,k)$ for all $k>1$.
\end{lemma}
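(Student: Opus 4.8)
The plan is to imitate the construction of Lemma~\ref{4k}. I would take the point set to be $\Z_{2k}\times\{0,1,2\}$, with supergroups $D_a=\Z_{2k}\times\{a\}$ of size $2k$, and decompose the edge set of $K_{6k}$ into the three copies of $K_{2k}$ lying inside the $D_a$ together with the complete tripartite graph $K_3[2k]$ on $D_0,D_1,D_2$. Each $K_{2k}$ has a $1$-factorization into $2k-1$ copies of $M_k$. For the tripartite part, halve each $D_a$ into two sets of size $k$; then each $K_{2k,2k}$ between two supergroups splits into four copies of $K_{k,k}$, each of which has a $1$-factorization into $k$ copies of $M_k$. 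Altogether this produces $3(2k-1)+3\cdot 4k=18k-3=3(6k-1)$ blocks, which is exactly three per line of a $(6k-1)\times(6k-1)$ array, as is forced.

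For the array itself I would use a block-diagonal layout: a $(2k-1)\times(2k-1)$ subarray for the within-supergroup factors and a $4k\times 4k$ subarray for the tripartite factors, which together fill the $(6k-1)\times(6k-1)$ square. In the first subarray, writing $f^a_0,\dots,f^a_{2k-2}$ for the $1$-factors inside $D_a$, place $f^a_i$ in row $i$ and column $i+a$ (indices modulo $2k-1$); since $k>1$ the three shifts $0,1,2$ are distinct modulo $2k-1$, so every row and every column of this subarray contains exactly one factor from each $D_a$ and hence meets all $6k$ points. In the second subarray one must place the twelve $K_{k,k}$-factorizations, three to a line, so that every line carries one factor from each of three edge-disjoint $K_{k,k}$'s whose six defining half-groups are all distinct; equivalently, the three ``active'' $K_{k,k}$'s in each line must form a perfect matching of $K_{2,2,2}$ on the six half-groups. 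I would organise this using a $1$-factorization $G_1,\dots,G_4$ of $K_{2,2,2}$, with bands of rows and columns cycling through the $G_t$. Finally one checks that each pair of points occurs in exactly one block, so the array is an OMD$(6k,k)$.

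The main obstacle is the tripartite subarray. Even though the counts balance, one must realise a $4k\times 4k$ array with three copies of $M_k$ per row and per column, at most one per cell, and every line meeting all $6k$ points; the obvious ``one $k\times k$ diagonal block per $G_t$'' layout fails because a $k\times k$ band has only $k^2$ cells but must hold $3k$ blocks, which is impossible when $k=2$ (the case $n=12$). I would get around this either by letting the bands overlap in columns, which leaves enough room for all $k\ge 3$, or, for the single value $n=12$, by an ad hoc layout --- for instance stacking three copies of the OMD$(2k,k)$ of Lemma~\ref{2k} and using their holes and transversals to slot in the bipartite blocks, then adjoining two further rows and columns, a layout whose arithmetic again distributes $3(2k-1)+12k$ blocks three to a line. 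Pinning down the column resolution classes and verifying orthogonality in whichever layout is adopted, uniformly in $k$, is the substantive part of the argument.
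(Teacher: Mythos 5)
Your edge decomposition of $K_{6k}$ and your treatment of the within-supergroup edges coincide with the paper's: it too places the $i$-th $1$-factor of supergroup $a$ in row $i$, column $i+a$ of a $(2k-1)\times(2k-1)$ subarray, using $2k-1\ge 3$, and it too pairs this with a $4k\times 4k$ subarray for the tripartite part built from the twelve $K_{k,k}$'s on the six half-groups. But the proof has a genuine gap exactly where you flag it yourself: the $4k\times 4k$ tripartite subarray is never actually constructed. ``Bands of rows and columns cycling through the $G_t$'' is not a construction --- if row band $i$ is to carry exactly the edges of the $1$-factor $G_i$ and column band $j$ exactly those of $G_j$, then cell $(i,j)$ may only hold edges of $G_i\cap G_j=\emptyset$ for $i\ne j$, which forces all of $G_i$ into the diagonal cell $(i,i)$, i.e.\ the very layout you rejected. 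And your fallback of an ad hoc array for $n=12$ is left entirely unspecified. Since you concede that ``pinning down the column resolution classes and verifying orthogonality\dots is the substantive part of the argument,'' the substantive part is missing.

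The missing ingredient is small and concrete: an \emph{orthogonal} $1$-factorization of $K_{2,2,2}$ on the six half-groups, presented as a $4\times 4$ array with twelve non-empty cells, one edge per cell, every row and every column a perfect matching. The paper simply writes such an array down. Expanding each cell to a $k\times k$ subarray and filling each non-empty cell with the diagonal OMD$(M_1[k],k)$ of Lemma~\ref{M1k} yields the $4k\times 4k$ array: each row of it passes through three non-empty $k\times k$ cells lying in three \emph{distinct} column bands, each contributing exactly one $M_k$, so every line carries three blocks covering all $6k$ points, and the pair-coverage and orthogonality conditions are inherited cell by cell. In particular your $k=2$ obstruction disappears: the count $3k>k^2$ only arises if all three edges of a $1$-factor are crammed into one diagonal $k\times k$ block, whereas the orthogonal layout spreads them across a full row band. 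No case split on $k$ and no special treatment of $n=12$ are needed. To repair your proof you need only exhibit (or otherwise establish the existence of) that $4\times 4$ array.
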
 
\begin{proof}
The square below gives an orthogonal 1-factorization of $K_{2,2,2}$ with point set $X=\Z_3\times \Z_2$.
\[
\begin{array}{|c|c|c|c|} \hline
0_0 1_0 & 0_1 2_0 & 	    & 1_1 2_1   \\ \hline
0_1 2_1 & 0_0 1_1 & 1_0 2_0 & 	 	\\ \hline
1_1 2_0 & 	  & 0_0 2_1 & 0_1 1_0 \\ \hline
	& 1_0 2_1 & 0_1 1_1 & 0_0 2_0 \\ \hline
\end{array}
\]
We construct our design on $X\times \Z_{k}$. Expand each cell of the square above by $k$ and on the expansion of each non-empty cell containing a block $B$, place a copy of an OMD$(M_1[k],k)$ with point set $B\times \Z_k$. The resulting $4k\times 4k$ square, $A$ say, is an orthogonal 1-factorization of $K_{2k,2k,2k}$. Let $a_i$, $b_i$ and $c_i$, $0 < i < 2k$ be the $i^{\rm th}$ 1-factor of a 1-factorization of each of the parts of size $2k$. Form the $(2k-1)\times (2k-1)$ array $B$ below, note that when $k>1$, $2k-1 \geq 3$.
\[
B = 
\begin{array}{|c|c|c|c|c|c|c|} \hline
a_1 & b_1 & \;\;c_1\;\; & & \cdots & & \\ \hline
	    & a_2 & b_2 & \;\;c_2\;\; &\cdots & & \\ \hline
\vdots &\vdots & \vdots& \vdots & \;\; \ddots\;\; & \vdots &  \vdots\\ \hline 
c_{2k-2} & & & & \cdots & a_{2k-2} & b_{2k-2} \\ \hline
b_{2k-1} & c_{2k-1}	& & & \cdots & & 	   a_{2k-1} \\ \hline
\end{array}
\]
Now 
$
\begin{array}{|c|c|} \hline
A & \\ \hline
 & B \\ \hline
\end{array}
$ is the required $(6k-1)\times (6k-1)$ array.
\end{proof}

\section{Main Theorem}

In this section we give a recursive construction and then use it to prove our main theorem.
\begin{theorem}
\label{recurse}
If there exists an $OMD(n,l)$ with a transversal, an $OMD(M_l[s],k)$ and an $OMD(M_l[K_s], k)$ with a transversal and a hole of size $s-1$, then there exists an $OMD(sn,k)$ with a transversal.
\end{theorem}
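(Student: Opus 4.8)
The plan is a Wilson-type inflation: start from an $OMD(n,l)$, blow up each of its $n$ points to $s$ points, and fill the inflated cells with the two ingredient OMDs, using the transversal of the $OMD(n,l)$ and the hole of the $OMD(M_l[K_s],k)$ to make the side lengths match. Concretely, fix an $OMD(n,l)$ on a point set $V$ of size $n$ with a transversal $T$, given as an $(n-1)\times(n-1)$ array, and work on $W = V\times\Z_s$, so $|W| = sn$. Replacing every cell by an $s\times s$ block of cells and every row (column) by $s$ consecutive rows (columns) produces an $s(n-1)\times s(n-1)$ array in which a non-empty cell carrying a matching on $B\subseteq V$ now carries a copy of $M_l[s]$ on $B\times\Z_s$; these copies of $M_l[s]$ partition the edges of $K_{sn}$ joining distinct parts $\{v\}\times\Z_s$, while the $n$ ``within-part'' complete graphs $K_s$ on the sets $\{v\}\times\Z_s$ are not yet covered.

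Next I would fill the cells. The non-empty cells of $T$ number $n/(2l)$ and, by the transversal property, the vertex sets $B$ of their underlying matchings partition $V$; in each such cell place a copy of $OMD(M_l[K_s],k)$ on $B\times\Z_s$. Since $M_l[K_s]$ is the union of $M_l[s]$ with a $K_s$ on each of the $2l$ parts met by $B$, these copies decompose both the inflated matchings and, ranging over all non-empty cells of $T$, every within-part $K_s$ exactly once (each $v\in V$ lies in exactly one non-empty cell of $T$). In every non-empty cell not belonging to $T$, place a copy of $OMD(M_l[s],k)$ on the corresponding $B\times\Z_s$, and leave empty cells empty. Now every edge of $K_{sn}$ lies in exactly one block $M_k$, and each row of the array consists, in each inflated cell it meets, of one resolution class of the ingredient OMD placed there; as the original row of $OMD(n,l)$ is itself a resolution class, these point sets partition $W$, so the row covers $W$ exactly once, and likewise for columns.

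The side lengths are the crux. An $OMD(M_l[s],k)$ has replication number $s$ (its $ls^2$ edges split into classes of $ls$ edges), so it fits exactly in the $s\times s$ block of an inflated cell; but an $OMD(M_l[K_s],k)$ has replication number $2s-1$ ($ls(2s-1)$ edges, again in classes of $ls$). So I would adjoin one common set of $s-1$ new rows $E$ and $s-1$ new columns $F$, bringing the array to side $s(n-1)+(s-1)=sn-1$, the correct size, and place each copy of $OMD(M_l[K_s],k)$ on the union of its $s\times s$ inflated cell with $E$ and $F$, arranging its hole of size $s-1$ to occupy exactly $E\times F$. Then $E\times F$ stays globally empty, and a new row meets, across the $n/(2l)$ copies of $OMD(M_l[K_s],k)$, a complete set of resolution classes whose point sets partition $W$, so it covers $W$ once; likewise for new columns. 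This produces an $OMD(sn,k)$.

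It remains to exhibit a transversal of the result. Using the empty cells of $T$ (blown up) and the empty block $E\times F$, one reads off empty diagonal cells covering exactly the rows and columns lying outside every copy of $OMD(M_l[K_s],k)$; what remains is to cover, inside those copies, the rows of the inflated non-empty cells of $T$ together with $E$ (and the analogous columns) while hitting every point of $W$, and for this I would use the transversals of the copies of $OMD(M_l[K_s],k)$, dividing $E$ and $F$ among the $n/(2l)$ copies in a way compatible with where each copy's hole sits. I expect this final reconciliation — matching a single shared strip of $s-1$ new rows and columns simultaneously against the transversals and holes of all $n/(2l)$ copies — to be the main obstacle, and it is precisely to make it possible that the $OMD(M_l[K_s],k)$ is required to carry both a transversal and a hole of size $s-1$; small values of $n/(2l)$ and the case $k=1$ may have to be checked by hand.
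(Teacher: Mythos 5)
Your construction is essentially identical to the paper's: inflate the $OMD(n,l)$ by $s$, adjoin $s-1$ common extra rows and columns to reach side $s(n-1)+s-1=sn-1$, place the copies of the $OMD(M_l[K_s],k)$ on the expanded transversal cells with all of their holes superimposed on the common $(s-1)\times(s-1)$ corner, and fill every other non-empty cell with an $OMD(M_l[s],k)$. The one point you explicitly leave open --- exhibiting a transversal of the resulting array --- is also not worked out in the paper, which simply asserts that ``the result is the required design''; since the main theorem invokes this construction only once (with $l=1$, $s=k$) and never needs the output transversal, this does not affect the paper's conclusions, but your flagging of it is fair.
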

\begin{proof}
Begin with an $OMD(n,l)$ with a transversal, on point set $X$, $\cal M$. We will construct the new design on point set $X \times \Z_s$. Expand each cell of the array into an $s \times s$ subarray and add $s-1$ extra rows and columns. Since an $OMD(n,l)$ has side $n-1$, and $s(n-1) + s - 1 = sn-1$, we now have an appropriately sized array on which to place an $OMD(sn, k)$. 

Firstly, consider the expansions of the cells that made up the transversal in the $OMD(n,l)$. On the expansion of each non-empty cell containing a block $B$ of the transversal, place the a copy of the $OMD(M_l[K_s], k)$ with point set $V(B) \times \Z_s$. Place the last $s-1$ rows and columns into the extra rows and columns that were added, in such a way that the hole is placed over the $(s-1)\times(s-1)$ common cells as indicated in the figure below.

\[
\begin{array}{|c|c|c|c|}  \hline
{\cellcolor[gray]{.8}} Hole & \ldots & {\cellcolor[gray]{.8}} & \ldots \\ \hline
\vdots &  \ddots & \vdots &  \ddots \\  \hline
{\cellcolor[gray]{.8}} & \ldots & {\cellcolor[gray]{.8}} OMD(M_l[K_s],k) & \ldots  \\  \hline
\vdots &  \ddots & \vdots & \ddots \\  \hline
\end{array}
\]

Now, on the expansion of every other non-empty cell containing a block $B$, place a copy of an OMD($M_l[s]$, k) with point set $V(B) \times \Z_s$ in such a way that the expansion of each point remains empty. 
It is straightforward to see that the result is the required design.
\end{proof}

We are now ready to prove our main result.
\begin{thm:main}
There exists an $OMD(n,k)$ if and only if $n \equiv 0 \pmod{2k}$ except when $k=1$ and $n = 4$ or $6$.
\end{thm:main}
\begin{proof}
We first note that the condition $n \equiv 0 \pmod{2k}$ is necessary by Theorem~\ref{NC}. 
The case $k=1$ is Theorem~\ref{n1}.
An OMD$(2k,k)$ exists by Lemma~\ref{2k}, an OMD$(4k,k)$, $k>1$, exists by Lemma~\ref{4k} and an OMD$(6k,k)$, $k>1$, exists by Lemma~\ref{6k}.
 
Now assume that $k>1$ and $n\geq 8k$, we apply Theorem~\ref{recurse} with $\ell=1$ and $s=k$. An OMD$(n,1)$ exists by Theorem~\ref{n1} and an OMD$(M_1[k],k)$ exists by Lemma~\ref{M1k}. We note that $M_1[K_k]\cong K_{2k}$ and so an OMD$(M_1[K_k],k)$ with a transversal and a hole of size $k-1$ exists by Lemma~\ref{2k}.
\end{proof}

\end{document}